\newtheorem{theorem}{Theorem}
\newtheorem{corollary}{Corollary}
\newtheorem{lemma}{Lemma}
\newtheorem*{conjecture-linial}{Linial's Conjecture~\cite{Linial1981}}
\newtheorem*{conjecture-dual-linial}{Linial's Dual Conjecture~\cite{Linial1981}}
\newcommand{\Set}[1]{\{#1\}}
\newcommand{\Min}[1]{\min \{#1\}}
\newcommand{\concat}{\circ}
\newcommand{\sB}{\mathcal{B}}
\newcommand{\sC}{\mathcal{C}}
\newcommand{\sP}{\mathcal{P}}
\newcommand{\sQ}{\mathcal{Q}}
\begin{document}

\begin{frontmatter}

\title{On Linial's Conjecture for Spine Digraphs}

\author[unicamp]{Maycon Sambinelli\corref{cor}}
\ead{msambinelli@ic.unicamp.br}
\cortext[cor]{Corresponding author}

\author[ufscar]{Cândida Nunes da Silva}
\ead{candida@ufscar.br}

\author[unicamp]{Orlando Lee}
\ead{lee@ic.unicamp.br}

\address[unicamp]{Institute of Computing, University of Campinas, Campinas, São Paulo, Brazil}
\address[ufscar]{Department of Computing, Federal University of São Carlos, Sorocaba, São Paulo, Brazil}

\begin{abstract}
In this paper we introduce a superclass of split digraphs, which we call  \textbf{spine digraphs}. Those are the digraphs $D$ whose vertex set can be partitioned into two sets $X$
and $Y$ such that the subdigraph induced by $X$ is traceable and $Y$ is a stable set. We also show that Linial's Conjecture holds for spine digraphs.

\end{abstract}

\begin{keyword}
Path partition\sep $k$-partial coloring \sep split digraph \sep Linial's Conjecture
\end{keyword}

\end{frontmatter}


\section{Introduction}

The digraphs  considered in this text do not contain loops or parallel arcs (but may contain cycles of length two).
Let $D$  be a  digraph. We denote the set of vertices of $D$ by $V(D)$ and the set of arcs of $D$ by $A(D)$.
We use $(u,v)$ to denote an arc with \textbf{head} $v$ and \textbf{tail} $u$.
We say that $u$ and $v$ are \textbf{adjacent} if $(u, v) \in A(D)$ or $(v, u) \in A(D)$.
By a path of $D$, we mean a directed path of $D$ and by a stable set of $D$, we mean a stable set of the underlying graph of $D$.
We denote by $V(P)$ the set of vertices of a path $P$ and the \textbf{size} of a path $P$, denoted by $|P|$, is $|V(P)|$\footnote{Usually $|P|$ denotes the length of a path (number of arcs), but here it denotes the number of vertices.}.
We denote by $\lambda(D)$ the size of the longest path in $D$ and by $\alpha(D)$ the size of a maximum stable set.
A \textbf{path partition} of $D$ is a set of vertex-disjoint paths of $D$ that cover $V(D)$.
We say  that $\sP$ is an \textbf{optimal} path partition of $D$ if there is no path partition $\sP'$ of $D$ such that $|\sP'| < |\sP|$.
We denote by $\pi(D)$ the size of an optimal path partition of a digraph $D$.

Dilworth~\cite{Dilworth1950} showed that for every transitive acyclic digraph $D$ we have $\pi(D) = \alpha(D)$.
Note that this equality is not valid for every digraph; for example, if $D$ is a directed cycle with 5 vertices, then $\pi(D) = 1$ and $\alpha(D) = 2$.
However, Gallai and Milgram~\cite{GallaiMilgram1960} have shown that $\pi(D) \leq \alpha(D)$ for every digraph $D$.

Greene and Kleitman~\cite{GreeneKleitman1976} proved a generalization of Dilworth's Theorem, which we describe next.
Let $k$ be a positive integer.
The \textbf{$\boldsymbol{k}$-norm} of a path partition $\sP$, denoted by $|\sP|_k$, is defined as $|\sP|_k = \sum_{P \in \sP} \Min{|P|, k}$.
We say that $\sP$ is a \textbf{$\boldsymbol{k}$-optimal path partition} of $D$ if there is no path partition $\sP'$ such that $|\sP'|_k < |\sP|_k$.
We denote by $\pi_k(D)$ the $k$-norm of a $k$-optimal path partition of  $D$.
A \textbf{$\boldsymbol{k}$-partial coloring} $\sC^k$ is a set of $k$ disjoint stable sets called \textbf{color classes} (empty color classes are allowed).
The \textbf{weight}  of a $k$-partial coloring $\sC^k$, denoted by $||\sC^k||$, is defined as $||\sC^k|| = \sum_{C \in \sC^k} |C|$.
We say that $\sC^k$ is an \textbf{optimal $\boldsymbol{k}$-partial coloring} of $D$ if there is no $k$-partial coloring $\sB^k$ such that $||\sB^k|| > ||\sC^k||$.
We denote by $\alpha_k(D)$ the weight of an optimal $k$-partial coloring of  $D$.
Given these definitions, what Greene and Kleitman~\cite{GreeneKleitman1976} showed was that for every transitive acyclic digraph $D$, we have $\pi_k(D) = \alpha_k(D)$.
Note that $\pi(D) = \pi_1(D)$ and $\alpha(D) = \alpha_1(D)$.
Thus, Dilworth's Theorem is a particular case of Greene-Kleitman's Theorem in which $k = 1$.

As Gallai-Milgram's Theorem extends Dilworth's Theorem, it is a natural question whether Greene-Kleitman's Theorem can be extended to digraphs in general.
More precisely, is it true that for every digraph $D$ we have that $\pi_k(D) \leq \alpha_k(D)$?
Linial~\cite{Linial1981} conjectured that the answer for this question is positive.

\bigskip
\begin{conjecture-linial}
  \label{co:linial}
  Let $D$ be a digraph and $k$ be a positive integer. Then, $\pi_k(D) \leq \alpha_k(D)$.
\end{conjecture-linial}

Linial's Conjecture remains open, but we know it holds for  acyclic digraphs~\cite{Linial1981}, bipartite digraphs~\cite{Berge1982}, digraphs which contain a Hamiltonian path~\cite{Berge1982}, $k = 1$~\cite{Linial1978}, $k = 2$~\cite{BergerHartman2008} and $k \geq \lambda(D) - 3$~\cite{Herskovics2013}.
For more about this problem, we refer you to the survey presented by Hartman~\cite{Hartman2006}.

Linial also introduced a somewhat dual problem, which we are going to call as \textbf{Linial's Dual Conjecture}, in which the roles of paths and stable sets are exchanged.
To properly state that, we need a few definitions first.
Let $D$ be a digraph and $k$ a positive integer.
A \textbf{$\mathbf{k}$-path} in $D$ is a set of $k$ disjoint paths of $D$ (we allow empty paths).
The \textbf{weight} of a $k$-path $\sP^k$, denoted by $||\sP^k||$, is defined as $||\sP^k|| = \sum_{P\in\sP^k}|P|$.
We say that $\sP^k$ is an \textbf{optimal $\mathbf{k}$-path} of $D$ if there is no $k$-path $\sQ^k$ of $D$ such that $||\sQ^k|| > ||\sP^k||$.
We denote by $\lambda_k(D)$ the weight of an optimal $k$-path of $D$.
A \textbf{coloring} of $D$ is a partition of $V(D)$ into stable sets.
The \textbf{$\mathbf{k}$-norm} of a coloring $\sC=\{C_1,\ldots,C_t\}$, denoted by $|\sC|_k$, is defined as $|\sC|_k=\sum_{C\in\sC}\Min{|C|,k}$.
We say that $\sC$ is a \textbf{$\mathbf{k}$-optimal coloring} of $D$ if there is no coloring $\sC'$ of $D$ such that $|\sC'|_k < |\sC|_k$.
We denote by $\chi_k(D)$ the $k$-norm of a $k$-optimal coloring of $D$.

\begin{conjecture-dual-linial}
  \label{th:Linial-dual}
  Let $D$ be a digraph and $k$ be a positive integer. Then, $\chi_k(D) \leq \lambda_k(D)$.
\end{conjecture-dual-linial}

This conjecture also remains open and, like Linial's Conjecture, we know it holds for some particular cases, such as acyclic digraphs~\cite{AharoniEtAl1985}, bipartite digraphs~\cite{HartmanEtAl1994}, $k = 1$~\cite{Gallai1968,Roy1967}, $k \geq \pi(D)$ (trivial, since $\lambda_k(D) = |V(D)|$), and split digraphs~\cite{HartmanEtAl1994}, which we define next.

Recall that our digraphs may have no loops nor parallel arcs.
A \textbf{semi-complete digraph} is a digraph $D$ such that for every pair of distinct vertices $u,v$, $(u,v)\in A(D)$ or $(v,u)\in A(D)$ or both.
 A \textbf{tournament} is a digraph $D$ such that for every pair of distinct vertices $u,v$, either $(u,v)\in A(D)$ or $(v,u)\in A(D)$.
 Rédei~\cite{Redei1934} proved that every tournament (and hence, every semi-complete digraph) is \textbf{traceable} (i.~e. contains a Hamiltonian path).

For a digraph $D$ and $X\subseteq V(D)$, we denote by $D[X]$ the subdigraph of $D$ induced by $X$.
A digraph $D$ is a \textbf{split digraph} if there exists a partition $\{X,Y\}$ of $D$ such that $D[X]$ is a semi-complete digraph and $Y$ is a stable set of $D$.

Hartman, Saleh and Hershkowitz~\cite{HartmanEtAl1994} proved that $\chi_k(D) \leq \lambda_k(D)$ (Linial's Dual Conjecture) for every split digraph.
In fact, their proof can be extended to a superclass of split digraphs which we introduce next. We say that $D$ is a \textbf{spine digraph} if there exists a partition $\{X,Y\}$ of $V(D)$ such that $D[X]$ is traceable and $Y$ is a stable set in $D$. In this paper we prove Linial's Conjecture for spine digraphs.
We shall use the notation $D[X,Y]$ to indicate that $D$ is a spine digraph with such partition $\{X,Y\}$.

\section{Linial's conjecture for spine digraphs}
\label{sec:split}

First let us discuss the general idea of the proof of Hartman, Saleh and Hershkowitz~\cite{HartmanEtAl1994}  that $\chi_k(D) \leq \lambda_k(D)$ for every spine digraph $D[X,Y]$.
They first showed that $\chi_k(D) \leq |X| + k$ and $\lambda_k(D) \geq |X| + k - 1$ by exhibiting appropriate coloring and $k$-path.
If $\chi_k(D) \leq |X| + k - 1$, then the result follows.
Therefore, the critical case is when $\chi_k(D) = |X| + k$.
In this case, they showed that $\lambda_k(D) \geq |X| + k$ by constructing a $k$-path with such weight.

We follow the same strategy.
However, here the critical case (described later) is more complicated.
We begin by presenting simple bounds for $\pi_k(D)$ and $\alpha_k(D)$.

\bigskip
\begin{lemma}
\label{lem:upper_bound_linial}
 Let $D[X,Y]$ be a spine digraph.
 Then, $\pi_k(D) \leq |Y| + \Min{|X|, k}$.
\end{lemma}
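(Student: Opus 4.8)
The plan is to exhibit an explicit path partition $\sP$ of $D$ whose $k$-norm is at most $|Y| + \Min{|X|, k}$, since $\pi_k(D)$ is defined as the minimum $k$-norm over all path partitions. Because $D[X]$ is traceable, it contains a Hamiltonian path $P$ with $V(P) = X$ and $|P| = |X|$. The natural candidate partition is to take this single long path $P$ covering all of $X$, together with $|Y|$ trivial (single-vertex) paths, one for each vertex of $Y$. This is a valid path partition: the paths are vertex-disjoint and cover $V(D) = X \cup Y$.

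Next I would simply compute the $k$-norm of this partition. Each singleton path contributes $\Min{1, k} = 1$ to the $k$-norm (since $k \geq 1$), so the $|Y|$ trivial paths contribute $|Y|$ in total. The Hamiltonian path $P$ on $X$ contributes $\Min{|P|, k} = \Min{|X|, k}$. Hence
\[
  |\sP|_k = \sum_{Q \in \sP} \Min{|Q|, k} = |Y| + \Min{|X|, k}.
\]
Since $\pi_k(D)$ is the minimum $k$-norm over all path partitions, we immediately get $\pi_k(D) \leq |\sP|_k = |Y| + \Min{|X|, k}$, which is the desired bound.

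There is essentially no hard part here: the only structural input is the traceability of $D[X]$, which lets us cover all of $X$ with a single path rather than many shorter ones, thereby capping that part of the $k$-norm at $\Min{|X|, k}$ instead of $|X|$. One small point I would be careful about is the degenerate case $X = \emptyset$ (or $Y = \emptyset$), where the Hamiltonian path is empty and contributes $\Min{0, k} = 0$; the formula still holds. I expect this lemma to serve as the easy ``upper bound on $\pi_k$'' half of the overall strategy outlined above, paired with a companion lower bound on $\alpha_k(D)$, with the genuine difficulty deferred to the critical case where these simple bounds coincide.
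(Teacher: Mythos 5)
Your proof is correct and is essentially identical to the paper's: both take a Hamiltonian path of $D[X]$ together with singleton paths on $Y$, compute the $k$-norm as $|Y| + \Min{|X|, k}$, and conclude by the minimality of $\pi_k(D)$.
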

\begin{proof}
  Let $P$ be a Hamiltonian path in $D[X]$ and $\sP = \Set{P} \cup \Set{(y) : y \in Y}$.
  Clearly, $\sP$ is a path partition of $D$ for which $|\sP|_k = \Min{|X|, k} + |Y|$.
  Therefore, $\pi_k(D) \leq |\sP|_k = \Min{|X|, k} + |Y|$.
\end{proof}

\begin{lemma}
\label{lem:lower_bound_linial}
  Let $D[X,Y]$ be a spine digraph.
  Then, $\alpha_k(D) \geq |Y| + \Min{|X|, k - 1}$.
  Moreover, if $|X| < k$, then $\alpha_k(D) = |V(D)|$.
\end{lemma}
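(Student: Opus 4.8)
The plan is to exhibit an explicit $k$-partial coloring of $D$ whose weight attains the claimed bound, since $\alpha_k(D)$ is by definition the maximum weight of any $k$-partial coloring. The construction exploits the two defining features of a spine digraph: $Y$ is a stable set, and a single vertex is always a stable set, so no deep structural work is needed.

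Concretely, I would first take the whole of $Y$ as one color class; this is legitimate precisely because $Y$ is stable, and it contributes weight $|Y|$. Then I would fix an arbitrary subset $S \subseteq X$ with $|S| = \Min{|X|, k-1}$ and make each vertex of $S$ its own singleton color class. These singletons are trivially stable, are pairwise disjoint, and are disjoint from $Y$ since $S \subseteq X$. Together with the class $Y$ they number $1 + \Min{|X|, k-1} \le k$, so after padding with empty classes we obtain a valid $k$-partial coloring $\sC^k$ with $||\sC^k|| = |Y| + \Min{|X|, k-1}$. Hence $\alpha_k(D) \ge |Y| + \Min{|X|, k-1}$.

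For the ``moreover'' clause, I would note that $|X| < k$ forces $|X| \le k - 1$, so $\Min{|X|, k-1} = |X|$ and the inequality just established becomes $\alpha_k(D) \ge |Y| + |X| = |V(D)|$. Since trivially $\alpha_k(D) \le |V(D)|$, equality follows.

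I do not expect a genuine obstacle here; the argument is a one-shot construction, and the only point requiring care is the bookkeeping that the number of nonempty classes stays within $k$. This is exactly why the term involving $X$ is capped at $k-1$ rather than $k$: one class is already spent on $Y$. Note that the traceability of $D[X]$ plays no role in this lemma at all --- it is the companion upper bound of Lemma~\ref{lem:upper_bound_linial} that relies on the Hamiltonian path of $D[X]$.
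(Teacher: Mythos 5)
Your proposal is correct and matches the paper's proof: both exhibit the $k$-partial coloring consisting of the class $Y$ together with $\Min{|X|, k-1}$ singleton classes from $X$, and both settle the $|X| < k$ case by observing the coloring covers all of $V(D)$. The only difference is organizational --- the paper splits into the cases $|X| < k$ and $|X| \geq k$ up front, while you unify them and deduce the ``moreover'' clause from the inequality plus the trivial bound $\alpha_k(D) \leq |V(D)|$, which is, if anything, slightly cleaner.
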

\begin{proof}
  First, suppose that $|X| < k$.
  Let $\sC^k = \Set{Y} \cup \Set{\Set{x}: x \in X}$.
  Note that $\sC^k$ is a  $k$-partial coloring of $D$ with $||\sC^k|| = |V(D)|$.
  Therefore, $\alpha_k(D) = ||\sC^k|| = |Y| + |X| = |Y| + \Min{|X|, k - 1}$ and the result follows.
  Thus assume that $|X| \geq k$.
  Take $S\subset X$ such that $|S| = k - 1$, and let $\sC^k = \Set{Y} \cup \Set{\{x\} : x \in S}$.
  Clearly, $\sC^k$  is a $k$-partial coloring for which $||\sC^k|| = |Y| + k - 1$.
  Therefore, $\alpha_k(D) \geq ||\sC^k|| = |Y| + k - 1 = |Y| + \Min{|X|, k - 1}$.
\end{proof}

A spine digraph $D[X,Y]$ is \textbf{$\boldsymbol{k}$-loose} if either $|X| < k$ or there is a set $S \subseteq X$ such that $|S| = k$  and no vertex $y \in Y$ is adjacent to every vertex in $S$.
A spine digraph $D[X,Y]$ that is not $k$-loose is called \textbf{$\boldsymbol{k}$-tight}.

\begin{lemma}
\label{lem:cara_loose}
  If $D[X,Y]$ is a $k$-loose spine digraph, then $\alpha_k(D) \geq |Y| + \Min{|X|, k}$.
\end{lemma}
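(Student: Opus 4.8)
The plan is to split along the two ways in which a spine digraph can be $k$-loose. If $|X| < k$, the conclusion is immediate from Lemma~\ref{lem:lower_bound_linial}: it already gives $\alpha_k(D) = |V(D)| = |Y| + |X|$, and since $|X| < k$ we have $|X| = \Min{|X|, k}$, so $\alpha_k(D) = |Y| + \Min{|X|, k}$. The substantive case is therefore when $|X| \geq k$ and there is a set $S \subseteq X$ with $|S| = k$ such that no vertex of $Y$ is adjacent to every vertex of $S$. Here $\Min{|X|, k} = k$, so the goal reduces to producing a $k$-partial coloring of weight at least $|Y| + k$.

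To build such a coloring, I would write $S = \Set{x_1, \ldots, x_k}$ and exploit the defining property of $k$-looseness directly: every $y \in Y$ fails to be adjacent to at least one vertex of $S$, so for each $y$ we may fix an index $\sigma(y) \in \Set{1, \ldots, k}$ with $y$ non-adjacent to $x_{\sigma(y)}$. I then set $C_i = \Set{x_i} \cup \Set{y \in Y : \sigma(y) = i}$ for $i = 1, \ldots, k$ and take $\sC^k = \Set{C_1, \ldots, C_k}$. The idea is to use the $k$ vertices of $S$ as ``anchors'' of the $k$ color classes and to attach each $y \in Y$ to a class whose anchor it misses.

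The key step is to verify that each $C_i$ is a stable set. The vertices of $Y$ lying in $C_i$ are pairwise non-adjacent because $Y$ is stable, and each of them is non-adjacent to $x_i$ by the choice of $\sigma$; hence no arc appears inside $C_i$. The classes are pairwise disjoint, since each $x_i$ belongs only to $C_i$ and each $y$ is placed in exactly one class. Thus $\sC^k$ is a legitimate $k$-partial coloring, and $||\sC^k|| = |S| + |Y| = k + |Y|$, which yields $\alpha_k(D) \geq |Y| + k = |Y| + \Min{|X|, k}$. There is no real obstacle here beyond correctly reading the $k$-loose hypothesis: it guarantees, for each $y$, a vertex of $S$ that $y$ \emph{misses} rather than one it meets, and this is precisely what permits coloring $y$ with that vertex's color while keeping the class stable.
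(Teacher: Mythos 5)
Your proof is correct and follows essentially the same approach as the paper: the same case split on $|X| < k$ versus $|X| \geq k$, and the same construction using the $k$ vertices of $S$ as anchors of the color classes, assigning each $y \in Y$ to a class whose anchor it is non-adjacent to. The only difference is cosmetic (your explicit assignment function $\sigma$ and the spelled-out stability check, which the paper leaves implicit).
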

\begin{proof}
  If $|X| < k$, then by Lemma~\ref{lem:lower_bound_linial}, $\alpha_k(D) = |V(D)| = |Y| + |X| = |Y| + \Min{|X|, k}$.
  We may thus assume that $|X| \geq k$.
  So, there exists $S \subseteq X$ such that $|S| = k$  and no vertex $y \in Y$ is adjacent to every vertex in $S$.
  Suppose that $S = \{x_1, x_2, \ldots, x_k\}$ and let $\sC^k_0 = \{C_1, C_2, \ldots, C_k\}$ be a $k$-partial coloring in which $C_i = \{x_i\}$ for $i = 1, 2, \ldots, k$.
  For each $y \in Y$, choose some vertex $x_i$ not adjacent to $y$ (which exists by the choice of $S$) and add $y$ in color class $C_i$.
  The $k$-partial coloring $\sC^k$ thus obtained has  weight $|Y| + k = |Y| + \Min{|X|, k}$.
  Therefore, $\alpha_k(D) \geq ||\sC^k|| = |Y| + \Min{|X|, k}$.
\end{proof}

\begin{theorem}
\label{the:k_loose}
   Let $D[X,Y]$ be a $k$-loose spine digraph.
   Then, $\pi_k(D) \leq \alpha_k(D)$.
\end{theorem}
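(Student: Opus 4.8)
The plan is simply to chain together the two bounds already in hand. By Lemma~\ref{lem:upper_bound_linial} we have the universal upper bound $\pi_k(D) \leq |Y| + \Min{|X|, k}$, valid for every spine digraph irrespective of looseness. Since we are assuming $D[X,Y]$ is $k$-loose, Lemma~\ref{lem:cara_loose} furnishes the matching lower bound $\alpha_k(D) \geq |Y| + \Min{|X|, k}$. Combining the two inequalities gives $\pi_k(D) \leq |Y| + \Min{|X|, k} \leq \alpha_k(D)$, which is exactly the assertion of the theorem. So the entire proof is a two-line composition of earlier results.

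There is essentially no obstacle to overcome in this case, and that is by design: the notion of $k$-looseness was introduced precisely so that the lower bound on $\alpha_k(D)$ rises to meet the generic upper bound on $\pi_k(D)$. The looseness hypothesis guarantees a set $S \subseteq X$ of size $k$ none of whose members is jointly adjacent to any single $y \in Y$, which is what lets the construction in Lemma~\ref{lem:cara_loose} append every vertex of $Y$ to some singleton class $\{x_i\}$ without breaking stability. This pushes the coloring weight from the $|Y| + k - 1$ of the generic Lemma~\ref{lem:lower_bound_linial} up to the full $|Y| + k$, closing the one-unit gap that separates $\pi_k$ from $\alpha_k$ in the general bounds.

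Thus the only thing I would verify carefully is that the hypotheses of both lemmas are genuinely met — a traceable $D[X]$ and stable $Y$ for Lemma~\ref{lem:upper_bound_linial}, and the $k$-loose condition for Lemma~\ref{lem:cara_loose} — after which the inequality is automatic. The real work of the paper evidently lies not here but in the complementary $k$-tight regime, where no such witnessing set $S$ exists, Lemma~\ref{lem:cara_loose} is unavailable, and one must instead produce a path partition that genuinely beats the bound $|Y| + \Min{|X|, k}$; that case is where I would expect the substantive combinatorial argument to reside.
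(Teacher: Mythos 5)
Your proof is correct and is exactly the paper's own argument: cite Lemma~\ref{lem:upper_bound_linial} for $\pi_k(D) \leq |Y| + \Min{|X|, k}$, cite Lemma~\ref{lem:cara_loose} for $\alpha_k(D) \geq |Y| + \Min{|X|, k}$, and chain the inequalities. Nothing further is needed.
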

\begin{proof}
  By Lemma~\ref{lem:cara_loose}, $\alpha_k(D) \geq |Y| + \Min{|X|, k}$.
  On the other hand, by Lemma~\ref{lem:upper_bound_linial}, $\pi_k(D) \leq |Y| + \Min{|X|, k}$ and the result follows.
\end{proof}

\begin{lemma}
\label{lem:path_cardinaliti_x_plus_one}
   Let $D[X,Y]$ be a spine digraph such that $\lambda(D) > |X|$.
   Then, $\pi_k(D) \leq \alpha_k(D)$.
\end{lemma}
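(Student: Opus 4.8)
The plan is to split on the size of $X$ and, in each case, to sandwich $\pi_k(D)$ and $\alpha_k(D)$ between bounds that are already available. If $|X| < k$ there is nothing to prove: Lemma~\ref{lem:lower_bound_linial} gives $\alpha_k(D) = |V(D)|$, while every path partition $\sP$ satisfies $|\sP|_k = \sum_{P \in \sP}\Min{|P|,k} \le \sum_{P\in\sP}|P| = |V(D)|$, so $\pi_k(D) \le |V(D)| = \alpha_k(D)$. Thus the only substantive case is $|X| \ge k$, and this is where the hypothesis $\lambda(D) > |X|$ is used.

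In the case $|X| \ge k$ I would take a longest path $P^\ast$ of $D$, so that $|P^\ast| = \lambda(D) \ge |X| + 1$, and complete it to a path partition $\sP = \Set{P^\ast} \cup \Set{(v) : v \in V(D)\setminus V(P^\ast)}$ by appending one trivial (single-vertex) path for each uncovered vertex. Since $|X| \ge k$ we have $|P^\ast| \ge |X| + 1 > k$, hence $\Min{|P^\ast|,k} = k$, and each singleton contributes $1$ to the $k$-norm. Using $|V(D)| = |X| + |Y|$ and $\lambda(D) \ge |X| + 1$ this gives
\[
  |\sP|_k = k + \bigl(|V(D)| - |P^\ast|\bigr) \le k + |V(D)| - (|X| + 1) = |Y| + k - 1,
\]
so $\pi_k(D) \le |Y| + k - 1$. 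To finish, I would invoke the lower bound of Lemma~\ref{lem:lower_bound_linial}: since $|X| \ge k > k-1$ we have $\alpha_k(D) \ge |Y| + \Min{|X|,k-1} = |Y| + k - 1$, and combining the two estimates yields $\pi_k(D) \le |Y| + k - 1 \le \alpha_k(D)$.

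Unlike the $k$-tight critical case that the authors flag as the genuinely hard part of the paper, I do not expect a real obstacle here. The one point requiring care is the case split itself: the identity $\Min{|P^\ast|,k} = k$, and hence the gain of one unit coming from the excess length $\lambda(D) - |X| \ge 1$, depends on $|X| \ge k$. Once this is in place the argument is pure bookkeeping, the essential observation being that the long path guaranteed by $\lambda(D) > |X|$ covers at least one more vertex than a Hamiltonian path of $D[X]$ would, which shaves off exactly the single unit separating the trivial upper bound $|Y| + k$ on $\pi_k(D)$ (Lemma~\ref{lem:upper_bound_linial}) from the lower bound $|Y| + k - 1$ on $\alpha_k(D)$.
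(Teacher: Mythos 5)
Your proof is correct and follows essentially the same route as the paper: a long path (guaranteed by $\lambda(D) > |X|$) completed by singleton paths yields $\pi_k(D) \leq |Y| + k - 1$, which is matched against the lower bound $\alpha_k(D) \geq |Y| + k - 1$ from Lemma~\ref{lem:lower_bound_linial}. The only cosmetic difference is that you split on $|X| < k$ versus $|X| \geq k$ directly, while the paper splits on whether $\alpha_k(D) = |V(D)|$ and then deduces $|X| \geq k$ from Lemma~\ref{lem:lower_bound_linial}; the two case analyses are equivalent.
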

\begin{proof}
  If $\alpha_k(D) = |V(D)|$, then the result follows trivially.
  Thus, we may assume that $\alpha_k(D) < |V(D)|$.
  By Lemma~\ref{lem:lower_bound_linial}, we have that $|X| \geq k$ and also that $\alpha_k(D) \geq |Y| + \Min{|X|, k - 1} = |Y| + k - 1$.
  Since $\lambda(D) > |X|$, there exists a path $P$ in $D$ such that $|P| = |X| + 1$.
  Let $\sP = \Set{P} \cup \Set{(v) : v \notin V(P)}$.
  Clearly, $\sP$ is a path partition of $D$ and $|\sP|_k = \Min{|P|, k} + |Y| - 1 = |Y| + k - 1$.
  Therefore, $\pi_k(D) \leq |\sP|_k = |Y| + k - 1  \leq \alpha_k(D)$.
\end{proof}

In view of the two preceding results, in order to complete the proof of Linial's Conjecture for spine digraphs, we must deal with the case in which $D$ is $k$-tight and $\lambda(D) \leq |X|$. To do so, we present two auxiliary lemmas; but first, we need some definitions.

Given a path $P= (x_1, x_2, \ldots, x_\ell)$, we denote by $ter(P)$ the terminal vertex of $P$, namely $x_\ell$. The subpath $(x_1, x_2, \ldots, x_i)$ is denoted by $Px_i$ and the subpath $(x_i, x_{i+1}, \ldots, x_\ell)$ is denoted by $x_iP$. We denote by $W \concat Q$ the concatenation of two paths $W$ and $Q$.

Let $D[X,Y]$ be a spine digraph and let $P = (x_1, x_2, \ldots, x_\ell)$ be a Hamiltonian path of $D[X]$.
We say that the Hamiltonian path $P$ is \textbf{zigzag-free} in $D$ if there is no vertex $y \in Y$ such that $(y, x_1) \in A(D)$, or $(x_\ell, y) \in A(D)$, or $(x_i, y) \in A(D)$ and $(y, x_{i + 1}) \in A(D)$.




\begin{lemma}
  \label{lem:maria-vai-com-as-outras}
 Let $D[X, Y]$ be a spine digraph, let $P = (x_1, x_2, \ldots, x_\ell)$ be a Hamiltonian zigzag-free path of $D[X]$ and let $y \in Y$ be a vertex adjacent to the first $t$ vertices of $P$.
  Then $(x_i, y) \in A(D)$ for $i = 1, 2, \ldots, t$.
\end{lemma}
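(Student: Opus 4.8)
The plan is to prove the statement by induction on $i$, exploiting the fact that the zigzag-free condition forbids $y$ from having any arc entering the path vertices once $y$ is known to receive an arc from the preceding vertex. The adjacency hypothesis guarantees that for each $i \le t$ at least one of the two possible arcs between $x_i$ and $y$ is present; the work is in ruling out the arc $(y, x_i)$.

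For the base case $i = 1$, since $y$ is adjacent to $x_1$, either $(x_1, y) \in A(D)$ or $(y, x_1) \in A(D)$. The zigzag-free hypothesis explicitly forbids the arc $(y, x_1)$ (this is its first clause), so I would conclude $(x_1, y) \in A(D)$.

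For the inductive step, I would assume $(x_i, y) \in A(D)$ for some $i$ with $1 \le i < t$ and deduce $(x_{i+1}, y) \in A(D)$. Because $i + 1 \le t$, the vertex $y$ is adjacent to $x_{i+1}$, so at least one of $(x_{i+1}, y)$ and $(y, x_{i+1})$ lies in $A(D)$. Suppose for contradiction that $(x_{i+1}, y) \notin A(D)$; then $(y, x_{i+1}) \in A(D)$. Together with the inductive hypothesis $(x_i, y) \in A(D)$, this produces the configuration in which $(x_i, y) \in A(D)$ and $(y, x_{i+1}) \in A(D)$, which is exactly the zigzag pattern prohibited by the definition of a zigzag-free path. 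This contradiction forces $(x_{i+1}, y) \in A(D)$, completing the induction.

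I expect no genuine obstacle here: the argument is a short induction that propagates the ``arc out of $x_i$'' property forward along $P$. The only point demanding care is invoking the correct clause of the zigzag-free definition at each stage — the clause forbidding an arc into the initial vertex $x_1$ anchors the base case, while the clause forbidding the $x_i \to y \to x_{i+1}$ zigzag drives the inductive step. The third clause, forbidding an arc $(x_\ell, y)$ out of the terminal vertex, plays no role in this lemma and is presumably reserved for later use.
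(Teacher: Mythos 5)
Your proof is correct and follows essentially the same approach as the paper: an induction (the paper phrases it on $t$, you on $i$, which is the same argument) whose base case uses the clause forbidding $(y, x_1)$ and whose inductive step rules out $(y, x_{i+1})$ via the forbidden zigzag pattern $(x_i, y), (y, x_{i+1})$. Your write-up is in fact slightly more explicit than the paper's, which dismisses the base case as obvious.
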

\begin{proof}
  The proof is by induction on $t$.
  If $t = 1$, then the result is obvious.
  Now, suppose that $t > 1$.
  By induction hypothesis, we have that $(x_i, y) \in A(D)$ for $i = 1, 2 \ldots, t - 1$.
  If $(y, x_t) \in A(D)$, then $P$ is not zigzag-free in $D$.
  Hence, $(x_t, y) \in A(D)$ and the result follows.
\end{proof}

\begin{lemma}
\label{lem:fish-bone-paths}
  Let $D[X, Y]$ be a $k$-tight spine digraph and let $P = (x_1, x_2, \ldots, x_\ell)$ be a Hamiltonian zigzag-free path of $D[X]$.
  Then, there exist paths $P_1$ and $P_2$ such that:

  \begin{enumerate}[(i)]
    \item $V(P_1) \cap V(P_2) = \varnothing$;
    \item $|P_1| + |P_2| = |X| + k + 1$;
    \item $ter(P_1) \cup ter(P_2) = \Set{x_\ell, y}$, for some $y \in Y$;
    \item $X \subseteq V(P_1) \cup V(P_2)$.
  \end{enumerate}
\end{lemma}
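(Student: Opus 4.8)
The plan is to induct on $k$, reducing the construction to the incorporation of a single additional vertex of $Y$ at each step. Before starting, I would record two preliminary facts. First, Lemma~\ref{lem:maria-vai-com-as-outras} shows that no $y\in Y$ can be adjacent to the whole prefix $x_1,\ldots,x_\ell$, since that would force $(x_\ell,y)\in A(D)$, contradicting zigzag-freeness; a quick consequence is that the $k$-tight hypothesis already forces $|X|>k$. Second, $k$-tightness is downward closed: if every $k$-subset of $X$ has a common neighbour in $Y$, then every $(k-1)$-subset does too, since one may extend it to a $k$-subset. These facts let me invoke the inductive hypothesis for $k-1$ on the same $D$, $X$, $Y$, $P$ (zigzag-freeness of $P$ does not depend on $k$), obtaining disjoint paths $Q_1,Q_2$ with $|Q_1|+|Q_2|=|X|+k$, with $X\subseteq V(Q_1)\cup V(Q_2)$ and $ter(Q_1)\cup ter(Q_2)=\Set{x_\ell,y'}$, and using exactly $k$ vertices of $Y$. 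The task then reduces to weaving in one further vertex of $Y$ so as to raise the total weight by exactly one while preserving all four properties.

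For the inductive step I would apply $k$-tightness to a carefully chosen $k$-subset $S\subseteq X$ to obtain a vertex $y^{*}\in Y$ adjacent to every vertex of $S$, and use Lemma~\ref{lem:maria-vai-com-as-outras} (taking $S$ to be a prefix) to pin down the arc directions: all arcs between $y^{*}$ and $S$ point into $y^{*}$. The vertex $y^{*}$ can then be used either as a new terminal of the path currently ending inside $X$, or be spliced into $Q_1$ or $Q_2$; the no-zigzag condition is exactly what dictates which splices are legal and forces the rerouting to respect the spine order. The delicate point is to choose $S$ so that the resulting $y^{*}$ is distinct from the at most $k$ vertices of $Y$ already used by $Q_1,Q_2$. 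Here I would combine two ingredients: zigzag-freeness, which severely constrains the adjacency pattern of each already-used vertex of $Y$, and the abundance of common neighbours guaranteed by $k$-tightness; together these should rule out the possibility that every $k$-subset of $X$ has all its common neighbours inside the already-used set, thereby producing a fresh, usable $y^{*}$.

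The base case $k=1$ I would handle directly. Here $1$-tightness gives, for $x_\ell$, a neighbour with an arc into $x_\ell$, and, for $x_1$, a neighbour receiving an arc from $x_1$; moreover the prefix observation above forces $|Y|\ge 2$, so two distinct usable vertices of $Y$ exist. After repairing the spine around them one obtains two disjoint paths of total weight $|X|+2$ terminating in $\Set{x_\ell,y}$. Finally I would verify properties (i)--(iv): disjointness and the coverage $X\subseteq V(P_1)\cup V(P_2)$ are preserved by each splice, the weight increases by exactly one per step so that it reaches $|X|+k+1$, the only vertex added outside $X$ at each step lies in $Y$, and the terminal set is kept equal to $\Set{x_\ell,y}$ throughout.

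The main obstacle is precisely this splicing step. Unlike the split-digraph setting, a vertex of $Y$ can in general \emph{not} be inserted between two consecutive spine vertices — that is exactly a forbidden zigzag — so absorbing an extra vertex of $Y$ forces a genuine rerouting of the two paths rather than a local insertion. Controlling that rerouting, namely keeping the paths vertex-disjoint, still covering all of $X$, still terminating in $\Set{x_\ell,y}$, and simultaneously guaranteeing that the newly used vertex of $Y$ is fresh, is the crux, and it is where the interplay between the $k$-tight and zigzag-free hypotheses must be exploited most carefully.
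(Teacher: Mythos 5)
Your proposal is not a proof; it is a proof plan whose central step is explicitly left open. You induct on $k$ by applying the inductive hypothesis for $k-1$ to the \emph{same} digraph $D$ (which is legitimate, since $k$-tightness is indeed inherited for $k-1$), and then you must ``weave in'' one additional vertex of $Y$ into the two paths $Q_1, Q_2$ returned by the induction. But those paths are completely unstructured --- the inductive hypothesis tells you nothing about which arcs they use or which vertices of $Y$ they contain --- and, as you yourself observe, zigzag-freeness forbids inserting a vertex of $Y$ between consecutive spine vertices, while appending one at either terminal is impossible: there is no arc from $x_\ell$ into $Y$ (zigzag-freeness) and no arc between two vertices of $Y$ (stability). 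So the splicing/rerouting step, which you call the crux and the main obstacle, is genuinely the whole difficulty, and nothing in your outline resolves it; phrases like ``these should rule out the possibility'' are hopes, not arguments. Your base case has the same defect: taking a neighbour of $x_1$ and a neighbour of $x_\ell$ and ``repairing the spine around them'' does not work, because the out-neighbour of $x_1$ cannot be inserted after $x_1$ (that is exactly a forbidden zigzag), and the path that is to end at $x_\ell$ must instead \emph{begin} at a $Y$-vertex having an arc into some interior vertex $x_{i+1}$ --- a vertex whose existence, and the direction of whose incident arc, you never establish.

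The paper's proof avoids this dead end by restructuring the induction so that the hard step never arises. Among all arcs $(x_i, y) \in A(D)$ with $y \in Y$, it chooses one with $i$ \emph{maximum}; zigzag-freeness forces $i < \ell$, and maximality forces every $y'$ adjacent to $x_{i+1}$ to satisfy $(y', x_{i+1}) \in A(D)$. It then applies the inductive hypothesis not to $D$ but to the truncated subdigraph $D' = D[X' \cup Y']$, where $X' = V(Px_i)$ and $Y'$ is the set of $Y$-vertices adjacent to $x_{i+1}$. One checks that $Px_i$ is zigzag-free in $D'$ and that $D'$ is $(k-1)$-tight: given $S' \subset X'$ with $|S'| = k-1$, apply $k$-tightness of $D$ to $S' \cup \{x_{i+1}\}$, and the common neighbour obtained automatically lies in $Y'$. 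The induction then returns $P_1'$ ending at $x_i$ and $P_2'$ ending at some $y' \in Y'$, and the extension is purely local and forced: $P_1 = P_1' \concat (x_i, y)$ and $P_2 = P_2' \concat (y', x_{i+1}) \concat x_{i+1}P$. Disjointness and the ``freshness'' of $y$, which you struggle to guarantee, are automatic here, because $y$ and the vertices of $x_{i+1}P$ lie outside $D'$ (if $y$ were adjacent to $x_{i+1}$, the arcs $(x_i,y)$ and $(y,x_{i+1})$ would form a zigzag). This device --- inducting on a truncated spine with $Y$ restricted to the neighbours of $x_{i+1}$, rather than on the same digraph with smaller $k$ --- is precisely the idea missing from your proposal.
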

\begin{proof}
  The proof is by induction on $k$.
  Suppose that $k = 1$.
  Since $D$ is 1-tight, we know that every $x_i \in X$ is adjacent to at least one vertex in $Y$.
  Let $y' \in Y$ be a vertex adjacent to $x_1$.
  Since $P$ is zigzag-free in $D$, we have that $(x_1, y') \in A(D)$.
  Among all arcs $(x_i, y) \in A(D)$ with $y \in Y$ and $1 \le i \le \ell$, choose an arc $a$ such that $i$ is maximum.
  Since $(x_1, y') \in A(D)$, one such arc exists.
  As $P$ is zigzag-free in $D$, we have that $i < \ell$ and so the vertex $x_{i + 1}$  exists.
  Let $y'' \in Y$ be a vertex adjacent to $x_{i + 1}$.
  By the choice of $a$, we have that $(y'', x_{i + 1}) \in A(D)$.
  Since $P$ is zigzag-free in $D$, we conclude that $y''\neq y$.
  Therefore, we have that $P_1 = Px_i \concat (x_i, y)$ and $P_2 = (y'', x_{i + 1}) \concat x_{i + 1}P$  meet the conditions (i) through (iv) above.
  This concludes the base case.


  Now, suppose that $k > 1$.
  Since $D$ is $k$-tight, then $|X| \geq k$ and there exists a vertex $y^* \in Y$ which is adjacent to every vertex of $S = \Set{x_1, x_2, \ldots, x_k}$, the set of the $k$ first vertices of $P$.
  By Lemma~\ref{lem:maria-vai-com-as-outras}, we have that $(x_i, y^*) \in A(D)$ for every vertex $x_i \in S$.
  In particular, $(x_k, y^*) \in A(D)$.
  Among all arcs $(x_i, y) \in A(D)$ with $y \in Y$ and $1 \le i \le \ell$, choose an arc $a$ such that $i$ is maximum.
  Note that such arc $a$ exists and that $i \geq k$, since $(x_k, y^*) \in A(D)$.
  As $P$ is zigzag-free in $D$, we have that $i < \ell$ and so the vertex $x_{i + 1}$  exists.
  Note that by the choice of $i$, if some vertex $y' \in Y$ is adjacent to $x_{i + 1}$ then $(y', x_{i + 1}) \in A(D)$.

  Let $X' = V(Px_{i})$ and let
  \[Y' = \Set{y' : y' \in Y \textrm{ and $y'$ is adjacent to $x_{i + 1}$}}.\]
  Let $D' = D[X' \cup Y']$.
  Clearly, $D'$ is a spine digraph.
  Let $P' = Px_{i}$.
  To show that $P'$ is zigzag-free in $D'$, suppose the contrary.
  Since $P$ is zigzag-free in $D$, there must exist some arc $(x_i, y') \in A(D')$ with $y' \in Y'$.
  However, by the definition of $Y'$, we have that $(y', x_{i+1}) \in A(D)$ which contradicts the fact that $P$ is zigzag-free in $D$.

  We now claim that $D'$ is $(k - 1)$-tight.
  Let $S' \subset X'$ with $|S'| = k - 1$.
  We need to show that there exists $y' \in Y'$ such that $y'$ is adjacent to every $x \in S'$.
  Let $S = S' \cup \{x_{i + 1}\}$.
  Since $D$ is $k$-tight, there exists $y'\in Y$ such that $y'$ is adjacent to every $x \in S$.
  By the definition of $Y'$, it follows that $y' \in Y'$.
  Therefore, $D'$ is $(k - 1)$-tight.

  By the  induction hypothesis applied to $D'$ and $P'$, there exist paths $P'_1$ and $P'_2$ in $D'$ which satisfy  conditions (i) through (iv).
  Without loss of generality, assume that $ter(P'_1) = x_i$ and $ter(P'_2) = y'$, for some $y' \in Y'$.
  Let $P_1 = P'_1 \concat (x_i, y)$ and $P_2 = P'_2 \concat (y', x_{i + 1}) \concat x_{i + 1}P$.
  We claim that $P_1$ and $P_2$ meet conditions (i) through (iv).
  Conditions (iii) and (iv) obviously hold.
  Condition (i) holds because $P_1'$ and $P_2'$ are disjoint by induction hypothesis and neither vertex $y$ nor any vertex of $x_{i + 1}P$ are vertices of $D'$.
  Condition (ii) holds because $|P_1'| + |P_2'| = i + k$ by induction hypothesis.
  Therefore
  \[|P_1| + |P_2| = |P_1'| + |P_2'| + |X| - i + 1 = |X| + k + 1\]
  and the proof is complete.
\end{proof}

\begin{theorem}
  \label{the:linial_split}
  Let $D[X,Y]$ be a spine digraph. Then, $\pi_k(D) \leq \alpha_k(D)$.
\end{theorem}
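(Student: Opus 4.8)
The plan is to finish the proof by a three-way case analysis that exhausts all spine digraphs $D[X,Y]$, reusing the machinery built above. First, if $D$ is $k$-loose, then Theorem~\ref{the:k_loose} already delivers $\pi_k(D) \leq \alpha_k(D)$ and nothing remains to be done. Second, if $\lambda(D) > |X|$, then Lemma~\ref{lem:path_cardinaliti_x_plus_one} settles the inequality directly. Consequently the only genuinely open situation is when $D$ is $k$-tight \emph{and} $\lambda(D) \leq |X|$, and this is where I would concentrate the argument. As noted in the text preceding the auxiliary lemmas, this is precisely the critical case.

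In this remaining case I would first note that $k$-tightness forces $|X| \geq k$, so Lemma~\ref{lem:lower_bound_linial} gives the lower bound $\alpha_k(D) \geq |Y| + k - 1$. It therefore suffices to exhibit a path partition of $D$ whose $k$-norm is at most $|Y| + k - 1$. To build one I want to invoke Lemma~\ref{lem:fish-bone-paths}, which requires a Hamiltonian zigzag-free path of $D[X]$. Here the hypothesis $\lambda(D) \leq |X|$ does the essential work: since $D[X]$ is traceable it has a Hamiltonian path $P = (x_1, \ldots, x_\ell)$ with $\ell = |X|$, and if $P$ failed to be zigzag-free then a witnessing vertex $y \in Y$ would let me prepend $y$ before $x_1$, append it after $x_\ell$, or splice it between some $x_i$ and $x_{i+1}$, producing a path of $D$ on $|X|+1$ vertices and contradicting $\lambda(D) \leq |X|$. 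Hence every Hamiltonian path of $D[X]$ is automatically zigzag-free in this case, and Lemma~\ref{lem:fish-bone-paths} applies.

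Lemma~\ref{lem:fish-bone-paths} then supplies disjoint paths $P_1, P_2$ with $X \subseteq V(P_1) \cup V(P_2)$ and $|P_1| + |P_2| = |X| + k + 1$. Because the paths are disjoint and cover all of $X$, exactly $k+1$ of the vertices of $Y$ lie on $P_1 \cup P_2$ (so in particular $|Y| \geq k+1$). I would then take the path partition $\sP = \Set{P_1, P_2} \cup \Set{(y) : y \in Y \setminus (V(P_1) \cup V(P_2))}$. The two long paths contribute $\Min{|P_1|, k} + \Min{|P_2|, k} \leq 2k$ to the $k$-norm, while the $|Y| - (k+1)$ singletons each contribute $1$, giving $|\sP|_k \leq 2k + |Y| - k - 1 = |Y| + k - 1$. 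Combining this with the lower bound yields $\pi_k(D) \leq |\sP|_k \leq |Y| + k - 1 \leq \alpha_k(D)$, which closes the remaining case and completes the proof.

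The genuinely hard construction — splitting $X$ into two paths that absorb exactly $k+1$ vertices of $Y$ — is already packaged in Lemma~\ref{lem:fish-bone-paths}, so the main obstacle in this final step is conceptual rather than computational. The two points to get right are recognizing that the three cases are exhaustive and, most importantly, identifying that the condition $\lambda(D) \leq |X|$ is exactly what guarantees the zigzag-free hypothesis needed to unlock the fish-bone lemma; once that link is made, the closing $k$-norm count is routine.
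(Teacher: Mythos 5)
Your proposal is correct and follows essentially the same route as the paper: the same exhaustive case split (Theorem~\ref{the:k_loose} for the $k$-loose case, Lemma~\ref{lem:path_cardinaliti_x_plus_one} when $\lambda(D) > |X|$), the same use of Lemma~\ref{lem:fish-bone-paths} on a Hamiltonian zigzag-free path in the remaining case, and the same closing $k$-norm count against the bound from Lemma~\ref{lem:lower_bound_linial}. The only differences are cosmetic improvements: you spell out why $\lambda(D) \leq |X|$ forces the Hamiltonian path to be zigzag-free (the paper just says ``clearly''), and you bound $\Min{|P_i|,k} \leq k$ directly rather than proving $|P_i| > k$ as the paper does, which suffices since only an upper bound on the $k$-norm is needed.
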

\begin{proof}

  We may assume that $D$ is $k$-tight, otherwise the result follows by Theorem~\ref{the:k_loose}.
  We may also assume that $\lambda(D) \leq |X|$, otherwise the result follows by Lemma~\ref{lem:path_cardinaliti_x_plus_one}.
  Let $P = (x_1, x_2, \ldots, x_\ell)$ be a Hamiltonian path in $D[X]$.
  Clearly $P$ is zigzag-free in $D$.
  By Lemma~\ref{lem:fish-bone-paths}, there exists disjoint paths $P_1$ and $P_2$ in $D'$ such that $|P_1| + |P_2| = |X| + k + 1$.
  Note that $|P_i| > k$, for $i = \Set{1, 2}$, otherwise $P_{3 - i}$ would be larger than $|X|$.
  Let $\sP = \Set{P_1, P_2} \cup \Set{(y) : y \notin V(P_1) \cup V(P_2)}$.
  It is easy to see that $\sP$ is a path partition in $D$.
  The $k$-norm of $\sP$ is $|\sP|_k = \Min{|P_1|, k} + \Min{|P_2|, k} + |Y| - k - 1 =   |Y| + k - 1$.
  So, $\pi_k(D) \leq |Y| + k - 1$.
  By Lemma~\ref{lem:lower_bound_linial}, we know that $\alpha_k(D) \geq |Y| + \Min{|X|, k - 1} = |Y| + k - 1$ and the result follows.
\end{proof}

\begin{corollary}
  If $D$ is a split digraph, then $\pi_k(D) \leq \alpha_k(D)$.
\end{corollary}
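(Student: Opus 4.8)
The plan is to show that the class of split digraphs is contained in the class of spine digraphs, so that the Corollary falls out of Theorem~\ref{the:linial_split} with essentially no additional work. Recall that $D$ is split when $V(D)$ admits a partition $\{X,Y\}$ with $D[X]$ semi-complete and $Y$ stable, whereas $D$ is a spine digraph when $V(D)$ admits a partition $\{X,Y\}$ with $D[X]$ merely traceable and $Y$ stable. The two definitions differ only in the requirement imposed on $D[X]$: semi-completeness versus traceability.

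The single fact I need is that semi-completeness implies traceability, and this is precisely Rédei's Theorem recalled earlier in the excerpt: every tournament, and hence every semi-complete digraph, contains a Hamiltonian path. So if $\{X,Y\}$ is a partition witnessing that $D$ is split, then $D[X]$ is semi-complete, hence traceable, and $Y$ is stable; thus the very same partition witnesses that $D$ is a spine digraph, which I would record as $D[X,Y]$.

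With this inclusion in hand, I would simply invoke Theorem~\ref{the:linial_split} on the spine digraph $D[X,Y]$ to conclude $\pi_k(D) \leq \alpha_k(D)$, completing the argument. There is no genuine obstacle here: the entire content of the Corollary is the observation that split digraphs form a subclass of spine digraphs, and that observation is immediate from Rédei's Theorem. All the substantive effort lives in Theorem~\ref{the:linial_split} and the lemmas supporting it (in particular the $k$-tight case handled via Lemma~\ref{lem:fish-bone-paths}), so the Corollary is a free consequence.
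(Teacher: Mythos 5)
Your proposal is correct and matches the paper's intended argument exactly: the paper presents the Corollary without proof precisely because, as it notes in the introduction via R\'edei's Theorem, every semi-complete digraph is traceable, so any split partition $\{X,Y\}$ is already a spine partition and Theorem~\ref{the:linial_split} applies verbatim.
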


\section{Acknowledgments}

The first and last authors were supported by National Counsel of Technological and Scientific Development of Brazil, CNPq (grants: 141216/2016-6, 311373/2015-1 and 477692/2012-5).

\bibliographystyle{elsarticle-num}
\bibliography{references.bib}

\end{document}